\newlength{\algorithmwidth}
\theoremstyle{plain}
\newtheorem{theorem}{Theorem}[section]
\newtheorem{lemma}[theorem]{Lemma}
\theoremstyle{definition}
\theoremstyle{remark}
\numberwithin{equation}{section}
\newcommand{\<}{\left\langle}
\renewcommand{\>}{\right\rangle}
\newcommand{\bigO}{\mathrm{O}}
\DeclareMathOperator*{\supp}{supp}
\DeclareMathOperator*{\trace}{trace}
\DeclareMathOperator*{\Null}{Null}
\DeclareMathOperator*{\argmin}{arg min}
\newcommand{\defby}{\overset{\mathrm{\scriptscriptstyle{def}}}{=}}
\def \P {\mathbb{P}}
\def \A {\mathcal{A}}
\def \M {\mathcal{M}}
\def \G {\mathcal{G}}
\def \S {\mathcal{S}}
\def \H {\mathcal{H}}
\def \R {\mathcal{R}}
\def \V {\mathcal{V}}
\def \W {\mathcal{W}}
\def \eps {\varepsilon}
\def \rank {{\rm rank }}
\begin{document}
\bibliographystyle{plain}
\setlength{\parindent}{0in}
\parskip 7.2pt

\title[]{Unicity conditions for low-rank matrix recovery}
\author{Y. C. Eldar, D. Needell, and Y. Plan}

\date{\today}


\begin{abstract}
Low-rank matrix recovery addresses the problem of recovering an unknown low-rank matrix from few linear measurements.  Nuclear-norm minimization is a tractible approach with a recent surge of strong theoretical backing.  Analagous to the theory of compressed sensing, these results have required random measurements.  For example, $m \geq Cnr$ Gaussian measurements are sufficient to recover any rank-$r$ $n\times n$ matrix with high probability.  In this paper we address the theoretical question of how many measurements are needed via any method whatsoever --- tractible or not.  We show that for a family of random measurement ensembles, $m \geq 4nr - 4r^2$ measurements are sufficient to guarantee that no rank-$2r$ matrix lies in the null space of the measurement operator with probability one.  This is a necessary and sufficient condition to ensure uniform recovery of all rank-$r$ matrices by rank minimization.  Furthermore, this value of $m$ precisely matches the dimension of the manifold of all rank-$2r$ matrices.  We also prove that for a fixed rank-$r$ matrix, $m \geq 2nr - r^2 + 1$ random measurements are enough to guarantee recovery using rank minimization.  These results give a benchmark to which we may compare the efficacy of nuclear-norm minimization.  
\end{abstract}

\maketitle

\section{Introduction}

In the compressed sensing problem, one wishes to recover an unknown vector $x\in\mathbb{R}^n$ from few linear measurements of the form $y=Ax\in\mathbb{R}^m$ where $A$ is an $m\times n$ measurement matrix and $m \ll n$ (see e.g.~\cite{CW08:Intro,B07:Compressive,CSwebpage} for tutorials on compressed sensing).  This problem is clearly ill-posed until additional assumptions are enforced.  A common assumption is that $x$ is \textit{$s$-sparse}: the support of $x$ is small, $\|x\|_0 = |\supp(x)| \leq s \ll n$.  If $A$ is injective on all $s$-sparse vectors then when $x$ is $s$-sparse, $x$ will be the solution to

\begin{equation}\tag{$L_0$}
\hat{x} = \argmin_w \|w\|_0 \quad\text{such that}\quad Aw = y.
\end{equation}

Moreover, for a matrix $A$ to be injective on all $s$-sparse vectors, we precisely require that its null space be disjoint from the set of all $2s$-sparse vectors.  Since there are many classes of matrices satisfying this property with $m=2s$ rows (see e.g.~\cite[Theorem 1.1]{CRT06:Robust-Uncertainty}), this shows that only $2s$ measurements are required to recover all $s$-sparse vectors $x\in\mathbb{R}^d$!  If we consider the problem of weak recovery, where we only wish to recover one \textit{fixed} vector $x$, $s+1$ measurements suffice.  These are of course \textit{theoretical} requirements, as the problem $(L_0)$ is a combinatorial optimization problem and is NP-Hard in general (see Sec. 9.2.2 of ~\cite{M05:Data}).

Work in the field of compressed sensing has however provided us with numerically feasible methods for sparse signal recovery.  One such method is $\ell_1$-minimization which is a relaxation of $(L_0)$:

\begin{equation}\tag{$L_1$}
\hat{x} = \argmin_z \|z\|_1 \quad\text{such that}\quad Az = y.
\end{equation}

It has been shown that for certain measurement matrices $A$, $(L_0)$ and $(L_1)$ are equivalent~\cite{DS89:Uncertainty-Principles,CT05:Decoding}.  These measurement ensembles can be taken randomly (for example, $A$ can be chosen to have Gaussian entries), and require $m \geq \bigO(s\log (n/s))$ measurements to guarantee reconstruction of all $s$-sparse vectors.  Thus we require slightly more measurements (from $2s$ to $Cs\log (n/s)$) but can recover via the problem $(L_1)$ which is numerically feasible by linear programming methods.  For weak recovery we need only slightly fewer measurements, see~\cite{DT06:Counting-Faces} for precise thresholds.

\subsection{Low-Rank Matrix Recovery}

A related problem to compressed sensing is the problem of low-rank matrix recovery, for which many results have been obtained (see e.g.~\cite{cai2008singular,candes2010matrix,candes2009exact,dai2010set,meka2009guaranteed,lee2010admira,ma2009fixed,recht2007guaranteed,G09:Recovering,candes2009tight}).  In this setting, we would like to recover a matrix $M$ from few of its linear measurements.  The measurement operator is of the form $\A: \mathbb{R}^{n\times n} \rightarrow \mathbb{R}^m$ and acts on a matrix $M$ by $(\A(M))_i = \langle A_i, M \rangle$ where $A_i$ are $n\times n$ matrices and $\langle \cdot , \cdot \rangle$ denotes the usual matrix inner product:

$$
\langle A, B \rangle \defby \trace(A^*B).
$$

Given the measurements $\A(M)\in \mathbb{R}^m$, we wish to recover the matrix $M\in\mathbb{R}^{n\times n}$.  This is of course again ill-posed for small $m$ in general.  However, if we operate under the assumption that $M$ has low rank then the problem can be made well-posed.  The question then becomes how large does $m$ need to be in order to guarantee recovery of rank-$r$ matrices and how does one recover such a matrix?  Analagous to the program $(L_0)$, one can consider solving

\begin{equation}\label{eqn:MCtheory}
\hat{X} = \argmin_X \rank(X) \quad\text{such that}\quad \A(X) = \A(M).
\end{equation}

This is simply a uniqueness problem; when is $M$ the unique low rank matrix having these measurements?  However, as in the case of $(L_0)$, the problem~\eqref{eqn:MCtheory} is intractible in general.

Instead of solving~\eqref{eqn:MCtheory}, we are often interested in a tractible method which provides worst-case guarantees; that is, guarantees which apply to \textit{all} rank-$r$ matrices whether arbitrary or adversarial.  A simple observation allows one to select an appropriate relaxation of~\eqref{eqn:MCtheory} that will do just this.  The rank of a matrix is the number of non-zero singular values.  That is, if $\sigma$ is the vector of singular values of $M$, then $\rank(M) = \|\sigma\|_0$.  Thus a natural relaxation would be to minimize $\|\sigma\|_1$.  We thus consider the minimization problem

\begin{equation}\label{eqn:MC}
\hat{X} = \argmin_X \|X\|_* \quad\text{such that}\quad \A(X) = \A(M),
\end{equation}

where $\|\cdot\|_*$ denotes the \textit{nuclear norm} which is defined by

$$
\|X\|_* = \trace(\sqrt{X^* X)}) = \sum_{i=1}^n \sigma_i.
$$

The program~\eqref{eqn:MC} can be cast as a semidefinite program (SDP) and is therefore numerically feasible.  {Moreover, it has been shown~\cite{negahban2010unified, recht2007guaranteed,oymak2010new, candes2009tight} that $m\geq Cnr$ measurements suffice to recover any $n\times n$ rank-$r$ matrix via~\eqref{eqn:MC}.}   

A question that does not appear to have been previously addressed is, how many measurements suffice to recover rank-$r$ matrices via the more natural (yet intractible) method~\eqref{eqn:MCtheory}?  In the compressed sensing setting, this question was easy to answer because the set of $s$-sparse vectors is the union of a finite number of linear subspaces.  In the matrix recovery problem, however, this question has remained unresolved.  Answering this question would not only fill a gap in the literature but also give theoretical bounds on the number of measurements required for low-rank matrix recovery against which those for problem~\eqref{eqn:MC} may be compared.  In the case of compressed sensing for example, it is clear that to use a tractible method we pay in the number of measurements by a factor of $\log (n/s)$.  What is this factor in the matrix recovery framework?  How good is nuclear-norm minimization?  These are the issues we address in this work.  

In this paper we prove that $4nr - 4r^2$ measurements are sufficient to recover all rank$-r$ $n\times n$ matrices using rank minimization almost surely.  To recover a fixed rank$-r$ $n\times n$ matrix with probability one, we show that only $2nr-r^2+1$ measurements are required.  We then compare our results to nuclear norm minimization and show that rank minimization requires less measurements, but only by a constant factor.

\section{Uniqueness Results}\label{thms}

In this section we provide a detailed summary of our main results.

We consider random operators $\A$, and first ask that for any rank-$r$ matrix $M$, the solution to~\eqref{eqn:MCtheory} is $\hat{X} = M$ with probability one.  If this were not the case, then there would be some matrices $M$ and $M'$ each with rank-$r$ or less such that $\A(M) = \A(M')$.  This means that the rank-$2r$ (or less) matrix $M - M'$ is in the null space of $\A$.  Therefore, to guarantee that~\eqref{eqn:MCtheory} reconstructs all rank-$r$ matrices, a necessary and sufficient condition is that there are no rank-$2r$ (or less) matrices in the null space of $\A$.  Thus we examine the following subset of $\mathbb{R}^{n\times n}$:
\begin{equation}\label{defS}
\R' = \{X\in\mathbb{R}^{n\times n} : \rank(X) = 2r\}.
\end{equation}
 We first wish to compute how large $m$ must be so that the null space of $\A$ is disjoint from $\R$.  We will then repeat this argument for smaller values of the rank.

It is well known that $\R'$ is a manifold with $4nr-4r^2$ dimensions.  Is $m \geq 4nr-4r^2$ sufficient to guarantee uniform recovery?  We will show that the answer is yes!  This is summarized by the following theorem.

Below, we call $\A$ a Gaussian operator if each $A_i$ is independent with i.i.d. Gaussian entries.

\begin{theorem}[Strong Recovery]\label{thm:main}
Let $r \leq n/2$. When $\A: \mathbb{R}^{n\times n} \rightarrow \mathbb{R}^m$ is a Gaussian operator with $m \geq 4nr-4r^2$,  problem~\eqref{eqn:MCtheory} recovers all rank-$r$ matrices with probability $1$. 
\end{theorem}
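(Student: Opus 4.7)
The strategy is a Bertini--Sard-style dimension count. As the section preamble explains, problem~\eqref{eqn:MCtheory} recovers every rank-$r$ matrix if and only if $\Null(\A)$ contains no nonzero matrix of rank $\le 2r$, so it is enough to show that when $\A$ is Gaussian with $m\ge 4nr-4r^2$, one has $\A(X)\ne 0$ for every nonzero $X$ with $\rank(X)\le 2r$ almost surely.

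My first step would be to stratify: write $\{X\ne 0:\rank(X)\le 2r\}=\bigcup_{k=1}^{2r}\M_k$ with $\M_k:=\{X\in\mathbb{R}^{n\times n}:\rank(X)=k\}$, a smooth submanifold of dimension $d_k=2nk-k^2$. Since $r\le n/2$ forces $k\le n$ and $d_k$ is increasing on $[0,n]$, $d_k\le d_{2r}=4nr-4r^2\le m$ for every $k\in\{1,\ldots,2r\}$. Because each $\M_k$ is invariant under nonzero scaling, $\Null(\A)$ meets $\M_k$ nontrivially if and only if $\A$ vanishes somewhere on the unit sphere $\S_k:=\M_k\cap S^{n^2-1}$, a compact submanifold of dimension $d_k-1$. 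This spherical reduction is the step that saves one dimension and is ultimately why $m\ge 4nr-4r^2$ (and not $m\ge 4nr-4r^2+1$) is enough.

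The main step is a projection argument of Sard type. Cover each $\S_k$ by a countable family of charts $\psi_j:B_j\to\S_k$ with $B_j\subset\mathbb{R}^{d_k-1}$ an open ball, and consider
\[
F_j:\mathbb{R}^{m\times n^2}\times B_j\to\mathbb{R}^m,\qquad F_j(\A,t)=\A(\psi_j(t)).
\]
For each fixed $t$, $\psi_j(t)\ne 0$, so the linear map $\A\mapsto\A(\psi_j(t))$ is a surjection onto $\mathbb{R}^m$; hence $F_j$ is a submersion and $F_j^{-1}(0)$ is a smooth submanifold of dimension $mn^2+d_k-1-m\le mn^2-1$. Its image under the projection onto $\mathbb{R}^{m\times n^2}$ therefore has Lebesgue measure zero. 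Taking the countable union over $j$ and the finite union over $k=1,\ldots,2r$, the set of ``bad'' operators $\A$ has Lebesgue measure zero, and since the Gaussian law on $\mathbb{R}^{m\times n^2}$ is absolutely continuous this set has probability zero.

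The main obstacle I foresee is getting the sharp bound $m\ge 4nr-4r^2$ rather than a weaker one. The naive surjective parameterization $(U,V)\mapsto UV^T$ on $(\mathbb{R}^{n\times 2r})^2$ overshoots $\dim\R'$ by $4r^2$ due to the gauge redundancy $(U,V)\mapsto(UG,VG^{-T})$ for invertible $G\in\mathbb{R}^{2r\times 2r}$ and would give a much weaker bound; even an intrinsic parameterization by balls in $\mathbb{R}^{d_k}$ only yields $m\ge 4nr-4r^2+1$. The final ``$-1$'' must come precisely from the fact that $\R'$ is a cone, so that one can replace $\M_k$ by its spherical slice $\S_k$ and drop one more dimension.
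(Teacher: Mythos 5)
Your proposal is correct, and it reaches the theorem by a genuinely different route than the paper. The paper's engine is an $\eps$-net argument (Theorem~\ref{thm:gen}): cover each chart of the unit-norm rank-$2r$ manifold by roughly $(3L/\eps)^d$ points, use independence of the coordinates $\langle A_i,\overline{X}\rangle$ together with the anti-concentration bound $\mathbb{P}(|\langle A_i,X\rangle|<\eps)<C\eps$ to beat the cardinality of the net, and let $\eps\to 0$. Your engine is instead a parametric-transversality (Sard-type) count: for each chart $\psi_j$ of the spherical slice $\S_k$ of the rank-$k$ stratum, the map $(\A,t)\mapsto\A(\psi_j(t))$ is a submersion because $\psi_j(t)\neq 0$ makes $\A\mapsto\A(\psi_j(t))$ surjective, so the zero set has dimension $mn^2+(d_k-1)-m\le mn^2-1$ once $m\ge d_k$, and its projection to operator space is Lebesgue-null; absolute continuity of the Gaussian law finishes the job. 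Both arguments rest on the same two geometric inputs --- the stratification of $\{\rank\le 2r\}$ into smooth manifolds of dimension $2nk-k^2$ and the cone structure that lets one pass to the unit sphere and save the one dimension that turns $4nr-4r^2+1$ into $4nr-4r^2$ (the paper packages this as Lemma~\ref{lem:mani} plus the $m\ge d+1$ hypothesis of Theorem~\ref{thm:gen}). What each buys: your argument is shorter and avoids covering numbers, Lipschitz constants, and the $\log(1/\eps)$ bookkeeping entirely, but it needs the joint law of $(A_1,\dots,A_m)$ to be absolutely continuous on $\mathbb{R}^{m\times n^2}$; the paper's Theorem~\ref{thm:gen} asks only that the scalars $\langle A_i,X\rangle$ be independent across $i$ and not concentrate at the origin, so it covers measurement ensembles your argument does not. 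Two small points to tidy: $\S_k=\M_k\cap S^{n^2-1}$ is not compact for $k<n$ (its closure contains lower-rank matrices), though you never actually use compactness, only a countable atlas; and the assertion that $\S_k$ is a smooth $(d_k-1)$-manifold deserves a line of justification --- the radial direction lies in the tangent space of $\M_k$ because $\M_k$ is a cone, so the intersection with the sphere is transverse, exactly the argument in the paper's Lemma~\ref{lem:mani}.
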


\begin{remarks}
{\bfseries \\1.} We actually prove a more general result in Theorem~\ref{thm:gen}.  In this result we consider any random linear operator $\A$ which takes $m\geq d+1$ measurements $\langle A_i, X\rangle$ where $\langle A_i, X\rangle$ are independent and do not concentrate around zero.  Then Theorem~\ref{thm:gen} shows that any $d$-dimensional continuously differentiable manifold over the set of real matrices is disjoint (except possibly at the origin) from the null space of $\A$.  Theorem~\ref{thm:main} will follow as a consequence.

{\bfseries 2.} {We consider real-valued matrices but our method can easily be extended to complex-valued matrices as well.}
\end{remarks}

Our proof technique also allows us to provide a bound on the number of measurements required for weak recovery.  Recall that in this framework we are interested in recovering \textit{one fixed matrix} $M$ with high probability.  Since $M$ is fixed, we require only that for all rank-$r$ matrices $X\neq M$ that $X-M$ is not in the null space of $\A$.  The set of all rank-$r$ matrices is a manifold of dimension $2nr-r^2$.  Recall that in compressed sensing for weak recovery we a require number of measurements equal to at least one more than the sparsity level.  The following result shows that for weak recovery of low-rank matrices we require a number of measurements at least one more than the dimension of the manifold of all rank-$r$ matrices.

\begin{theorem}[Weak Recovery]\label{thm:weak}
Fix a rank$-r$ $n\times n$ real matrix $M$.  When $\A: \mathbb{R}^{n\times n} \rightarrow \mathbb{R}^m$ is a Gaussian operator with $m \geq 2nr-r^2+1$, problem~\eqref{eqn:MCtheory} recovers the matrix $M$ with probability $1$. 
\end{theorem}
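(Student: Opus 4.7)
The plan is to invoke the general manifold result (Theorem~\ref{thm:gen}, alluded to in Remark~1 of the excerpt) after decomposing the set of ``failure perturbations'' as a finite union of smooth manifolds whose dimensions are each at most $2nr-r^2$.

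First I would reformulate the weak recovery condition: by linearity of $\A$, problem~\eqref{eqn:MCtheory} returns $\hat X = M$ iff there is no $X \neq M$ with $\rank(X) \leq r$ and $\A(X-M)=0$. Equivalently, setting $N = X - M$, the only element of the ``bad set'' $T_M \defby \{X - M : \rank(X) \leq r\}$ that lies in $\Null(\A)$ must be $N = 0$.

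Next I would stratify $T_M$ by exact rank. For each $k = 0, 1, \ldots, r$ let $\M_k \defby \{Y \in \Re^{n\times n} : \rank(Y) = k\}$; it is classical that $\M_k$ is a continuously differentiable manifold of dimension $d_k = 2nk - k^2$ (with $\M_0 = \{0\}$ and $d_0 = 0$), and that $T_M = \bigcup_{k=0}^{r}(\M_k - M)$. Since translation by $-M$ is a global diffeomorphism of $\Re^{n\times n}$, each $\M_k - M$ is again a smooth $d_k$-dimensional manifold. A one-line check gives $d_k - d_{k-1} = 2n - 2k + 1 > 0$ for $1 \leq k \leq n$, so $d_k \leq d_r = 2nr - r^2$ for every $k \in \{0,1,\ldots,r\}$. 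Consequently the hypothesis $m \geq 2nr - r^2 + 1$ implies $m \geq d_k + 1$ for every such $k$, which is precisely the dimension assumption in Theorem~\ref{thm:gen}. The remaining hypothesis is the non-concentration of the measurements; for a Gaussian operator each $\langle A_i, X\rangle$ is a nondegenerate Gaussian in any fixed nonzero $X$ and the $A_i$ are independent, so this is immediate.

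Finally I would apply Theorem~\ref{thm:gen} to each of the finitely many manifolds $\M_0 - M, \M_1 - M, \ldots, \M_r - M$ and intersect the resulting probability-one events to conclude that almost surely each $\M_k - M$ meets $\Null(\A)$ only (if at all) at the origin. The ``except possibly at the origin'' clause is then disposed of by bookkeeping: for $k = r$ the origin \emph{does} lie in $\M_r - M$ (because $M \in \M_r$), but $N = 0$ corresponds exactly to the desired recovery $X = M$; for $1 \leq k < r$ the origin is not in $\M_k - M$ at all, since $M \notin \M_k$; and for $k = 0$ the singleton $\M_0 - M = \{-M\}$ is nonzero, so Theorem~\ref{thm:gen} yields $\A(M) \neq 0$ almost surely. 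The real content of the argument is Theorem~\ref{thm:gen}; given it, the only step requiring thought is choosing the correct stratification of the rank-$\leq r$ locus and checking the dimension arithmetic, so beyond those routine verifications I do not anticipate any further obstacle.
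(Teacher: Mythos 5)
Your proposal is correct and follows essentially the same route as the paper: stratify the shifted rank-$\leq r$ locus by exact rank, note each stratum is a translate of a fixed-rank manifold of dimension $2nk-k^2 \leq 2nr-r^2$, apply Theorem~\ref{thm:gen} to each, and observe that the surviving point of $\Null(\A)$ can only be the origin, i.e.\ $X=M$. Your bookkeeping is in fact slightly more careful than the paper's (which sweeps the rank-$0$ stratum $\{-M\}$ and the location of the origin under the rug), but the argument is the same.
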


As we will see in Section~\ref{sec:discuss}, this theorem allows the comparison of rank minimization to the theoretical and empirical results of nuclear-norm minimization in the Gaussian setting.

We prove these results in the next section.  In Section~\ref{sec:discuss} we discuss the tightness of these bounds and compare them with results for nuclear-norm minimization.


\section{General Results and Proofs}

On our way to proving our main results, Theorems~\ref{thm:main} and~\ref{thm:weak}, we will prove a more general result about arbitrary manifolds of real matrices.  This result can be extended even further by considering manifolds over more arbitrary Banach spaces and following our proof.  For convenience we will restrict ourselves to the Banach space of real matrices.  Below, a continuously differentiable manifold is a manifold that may be equipped with a class of atlases having transition maps which are all $C^1$-diffeomorphisms.

\begin{theorem}\label{thm:gen}
Let $\R$ be a $d$-dimensional continuously differentiable manifold over the set of $n\times n$ real matrices. Suppose we take $m\geq d+1$ measurements of the form $\langle A_i, X\rangle$ for $X\in \R$, and define the operator $\A : \R \rightarrow \mathbb{R}^m$ which takes these measurements, $\A : X \mapsto y$ with $y_i = \langle A_i, X\rangle$.  Assume that there exists a constant $C=C(n)$ such that $\mathbb{P}(| \langle A_i, X \rangle | < \eps) < C\eps$ for every $X$ with $\|X\|_F=1$.  Further assume that for each $X\neq 0$ that the random variables $\{\langle A_i, X \rangle\}$ are independent.  Then with probability $1$,
$$
\Null(\A) \cap \R \backslash \{0\} = \emptyset.
$$ 
\end{theorem}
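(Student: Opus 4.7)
The approach is a covering plus $\epsilon$-net argument with a Lipschitz bootstrap. Since $\R$ is a $C^1$ manifold (hence second countable and $\sigma$-compact), the open submanifold $\R\setminus\{0\}$ is also $\sigma$-compact, so I would decompose $\R\setminus\{0\}=\bigcup_{j=1}^{\infty}K_j$ into countably many compact pieces, each contained in the image of a single $C^1$ chart $\phi_j\colon B_j\to\R$ with $B_j\subset\Re^d$ a closed ball. By countable subadditivity of $\P$, it then suffices to show $\P(K_j\cap\Null(\A)\ne\emptyset)=0$ for each $j$. Fix such a piece and write $K=K_j$, $\phi=\phi_j$, $B=B_j$; set $c=\min_{u\in B}\|\phi(u)\|_F>0$ (positive because $K$ is compact and avoids $0$) and let $L$ be a Lipschitz constant for $\phi$ on $B$. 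Because the relevant Lipschitz constant of $u\mapsto\A(\phi(u))$ involves $\max_i\|A_i\|_F$, a random quantity with no assumed tail bound, I would first truncate: for $R>0$, set $\Omega_R=\{\max_{i\leq m}\|A_i\|_F\leq R\}$; since each $\|A_i\|_F$ is a.s.\ finite we have $\P(\Omega_R)\nearrow 1$, so it is enough to bound $\P(\{K\cap\Null(\A)\ne\emptyset\}\cap\Omega_R)$ for each fixed $R$.

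To do this, fix $\epsilon>0$ and pick a $\delta$-net $N_\epsilon\subset B$ with $\delta=\epsilon/(2RL)$, so $|N_\epsilon|\leq C_1(R/\epsilon)^d$ for some $C_1$ depending only on $B,L,d$. For any fixed $u'\in N_\epsilon$, the variables $\{\langle A_i,\phi(u')\rangle\}_{i\leq m}$ are independent by hypothesis, and rescaling the anti-concentration assumption from $\|X\|_F=1$ gives
$$
\P\bigl(|\langle A_i,\phi(u')\rangle|<\epsilon\bigr)\leq C\epsilon/\|\phi(u')\|_F\leq C\epsilon/c,
$$
hence $\P(\|\A(\phi(u'))\|_\infty<\epsilon)\leq(C\epsilon/c)^m$. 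A union bound over $N_\epsilon$ then yields
$$
\P\bigl(\exists\,u'\in N_\epsilon:\|\A(\phi(u'))\|_\infty<\epsilon\bigr)\leq C_1(R/\epsilon)^d(C\epsilon/c)^m=C_2\,\epsilon^{m-d},
$$
where $C_2$ depends on $R,L,c,C,d,m$ only. On $\Omega_R$, the map $u\mapsto\A(\phi(u))$ is $(RL)$-Lipschitz in the $\ell^\infty$ norm, so every $u\in B$ lies within $\delta$ of some $u'\in N_\epsilon$ with $\|\A(\phi(u))-\A(\phi(u'))\|_\infty\leq RL\delta=\epsilon/2$. Thus if $\A(\phi(u))=0$ for some $u\in B$, the nearest $u'\in N_\epsilon$ satisfies $\|\A(\phi(u'))\|_\infty\leq\epsilon/2<\epsilon$, so $\P(\{K\cap\Null(\A)\ne\emptyset\}\cap\Omega_R)\leq C_2\,\epsilon^{m-d}$, which tends to $0$ as $\epsilon\to 0$ since $m\geq d+1$. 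Letting $R\to\infty$ and summing over $j$ completes the proof.

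The single-point bound $\P(\A(X)=0)=0$ is essentially immediate from the anti-concentration and independence assumptions; the substantive part of the argument, and the main obstacle, is passing from a single point to the whole continuum $\R$. The dimensional budget is tight, and the reason the bootstrap closes is that the net cardinality $\epsilon^{-d}$ is just barely beaten by the single-point probability $\epsilon^m$, with the slack coming entirely from the one extra measurement in the hypothesis $m\geq d+1$. A secondary subtlety is that no a priori uniform Lipschitz bound on the random operator $\A$ is available, which forces the preliminary truncation to $\Omega_R$ and the limit $R\to\infty$ at the end.
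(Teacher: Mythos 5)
Your proposal is correct and follows essentially the same route as the paper: a countable decomposition of $\R\setminus\{0\}$ into closed chart pieces, an $\eps$-net union bound combining independence with the anti-concentration hypothesis, and the observation that $m\geq d+1$ makes the net cardinality $\eps^{-d}$ lose to the pointwise probability $\eps^m$. The only (cosmetic) difference is how the random Lipschitz constant is controlled: you truncate to $\Omega_R=\{\max_i\|A_i\|_F\leq R\}$ and send $R\to\infty$ at the end, whereas the paper conditions on the event $\|\A\|_{F\to\infty}>\log(1/\eps)$ and absorbs it into the same $\eps\to 0$ limit.
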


\begin{remarks}
{\bfseries \\1.} The requirement that $\mathbb{P}(| \langle A_i, X \rangle | < \eps) < C\eps$ says that the densities of $\langle A_i, X \rangle$ do not spike at the origin.  A sufficient condition for this to hold for every $X$ with $\|X\|_F=1$ is that each $A_i$ has i.i.d. entries with continuous density.

{\bfseries 2.} The requirement $m \geq d+1$ is tight in the sense that the result does not generally hold for $m \leq d$.  For example, take $\R$ to be the intersection of any $(d+1)$-dimensional linear subspace of $\mathbb{R}^{n\times n}$ with the unit sphere.  Then it is not hard to show that $\Null(A)\cap\R\backslash\{0\} \neq \emptyset$ for any linear operator $\A : \mathbb{R}^{n\times n} \rightarrow \mathbb{R}^m$ as long as $m\leq d$. 

\end{remarks}

To prove this result we will utilize a well-known fact about covering numbers.  For a set $B$, norm $\|\cdot\|$ and value $\eps$, we denote by $N(B, \|\cdot\|, \eps)$ the smallest number of balls (with respect to the norm $\|\cdot\|$) of radius $\eps$ whose union contains $B$.  This number is called a \textit{covering number}, and the set of balls covering the space (or more precisely the center of these balls) is called an $\eps$-\textit{net}.  A bound on the covering number for the unit ball under the Euclidean norm $\|\cdot\|_2$ is now well known (see e.g. Ch. 13 of~\cite{LGM96:Const}):

\begin{lemma}\label{lem:CN}
For any $1 > \eps > 0$, we have
$$
N(B_2^d, \|\cdot\|_2, \eps) \leq \left(\frac{3}{\eps}\right)^d.
$$
\end{lemma}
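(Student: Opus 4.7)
The plan is a classical volume-packing argument. I take a maximal $\eps$-separated subset $\mathcal{N} \subset B_2^d$, meaning a maximal collection of points of $B_2^d$ pairwise at Euclidean distance greater than $\eps$. Such a set exists by a greedy construction since $B_2^d$ is compact, and maximality directly implies that $\mathcal{N}$ is already an $\eps$-net: any point of $B_2^d$ must lie within $\eps$ of some element of $\mathcal{N}$, for otherwise it could be adjoined, contradicting maximality. Hence $N(B_2^d, \|\cdot\|_2, \eps) \leq |\mathcal{N}|$.

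To bound $|\mathcal{N}|$, I observe that the open Euclidean balls of radius $\eps/2$ centered at the points of $\mathcal{N}$ are pairwise disjoint, since their centers are more than $\eps$ apart, and each is contained in the enlarged ball $(1 + \eps/2) B_2^d$ because every center lies in $B_2^d$. Because the Lebesgue measure of a ball of radius $\rho$ in $\Re^d$ scales as $\rho^d$, comparing total volume yields
\begin{equation*}
|\mathcal{N}| \, (\eps/2)^d \leq (1+\eps/2)^d,
\end{equation*}
so $|\mathcal{N}| \leq (1 + 2/\eps)^d$. Since $\eps < 1$ we have $1 < 1/\eps$, hence $1 + 2/\eps < 3/\eps$, giving the desired bound.

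There is no substantive obstacle in this argument; the only conceptual point is the standard packing/covering duality that a maximal $\eps$-separated set is automatically an $\eps$-net, after which the estimate reduces to comparing volumes of Euclidean balls. An equivalent presentation would build the net greedily and bound its cardinality directly via the volume of $(1+\eps/2)B_2^d$, but the symmetric packing formulation above is cleaner.
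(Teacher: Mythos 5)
Your proof is correct and complete: the packing/covering duality (a maximal $\eps$-separated set is an $\eps$-net) combined with the volume comparison $|\mathcal{N}|(\eps/2)^d \leq (1+\eps/2)^d$ gives $(1+2/\eps)^d \leq (3/\eps)^d$ for $\eps<1$, exactly as claimed. The paper does not prove this lemma itself but only cites a standard reference, and your volume-packing argument is precisely the classical proof that reference (and the literature generally) uses, so there is nothing further to compare.
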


We are now prepared to prove Theorem~\ref{thm:gen}.

\begin{proof}[Proof of Theorem~\ref{thm:gen}]
For simplicity we take $m = d+1$.  Since $\R$ is a continuously differentiable manifold, so is $\R\backslash\{0\}$ and this implies that there are a countable number of closed\footnote{Note that in general these sets are open, but by writing each $\V_i$ as a countable union of closed sets (for example $\V_i = \cup_{j=1\ldots\infty} \phi^{-1}(\{x: \|x\|_2 \leq 1-1/j\})$) we observe that we can choose them to be closed. } sets $\V_i\subset \R\backslash\{0\}$ such that
\begin{itemize}
\item $\bigcup \V_i = \R\backslash\{0\}$
\item For each $\V_i$, there exists a $C^1$-diffeomorphism $\phi_i: \V_i \rightarrow B_2^d$.  In words, there is a homeomorphism $\phi_i$ from $\V_i$ to the unit Euclidean ball in $\mathbb{R}^d$ (denoted $B_2^d$) such that $\phi_i$ and $\phi_i^{-1}$ are continuously differentiable. 
\end{itemize}

Our strategy is to show that for fixed $i$, $0\notin \A(\V_i)$ with probability $1$.  We will then apply a union bound using the fact that there are only countably many $\V_i$.  

Fix an $i$, and for convenience set $\phi = \phi_i$ and $\V = \V_i$.  Since $\phi^{-1}$ is continuously differentiable, it is Lipschitz on the closed set $B^d_2$.  Thus there is an $L>0$ such that
\begin{equation}\label{eqn:Lip}
\|\phi^{-1}(x)-\phi^{-1}(y)\|_F \leq L\|x-y\|_2.
\end{equation}
Next, let $\overline{B_2^d}$ be an $(\eps/L)$-net for $B_2^d$ with cardinality at most $\left(\frac{3L}{\eps}\right)^d$.  This is of course possible by Lemma~\ref{lem:CN}.  Then the net $\overline{\V}$ defined by $\overline{\V} = \phi^{-1}(\overline{B_2^d})$ is an $\eps$-net for $\V$.  Indeed, for any $X\in \V$, we have $\phi(X) \in B_2^d$ and so there is a $\overline{b}\in \overline{B_2^d}$ such that
$$
\|\overline{b} - \phi(X)\|_2 \leq \frac{\eps}{L}.
$$
By~\eqref{eqn:Lip} we then have
$$
\|\phi^{-1}(\overline{b}) - X\|_F \leq L\cdot\|\overline{b}-\phi(X)\|_2 \leq L\cdot\frac{\eps}{L} = \eps.
$$
Since $\phi^{-1}(\overline{b}) \in \phi^{-1}(\overline{B_2^d})$, this shows that $\overline{\V}$ is an $\eps$-net for $\V$.  

Using the fact that $\overline{\V}$ is an $\eps$-net for $\V$, we have that for any $X\in \V$, there is an $\overline{X} \in \overline{\V}$ such that $\|X - \overline{X}\|_F \leq \eps$.  This then implies
\begin{align*}
\|\A(X)\|_\infty &\geq \|\A(\overline{X})\|_\infty - \|\A(X - \overline{X})\|_\infty \\ \notag
&\geq \|\A(\overline{X})\|_\infty - \|\A\|_{F\rightarrow\infty}\|X-\overline{X}\|_F \\ \notag
&\geq \|\A(\overline{X})\|_\infty - \eps\cdot\|\A\|_{F\rightarrow\infty},
\end{align*}

where $\|\cdot\|_{F\rightarrow\infty}$ denotes the operator norm from the Frobenius norm to the supremum norm, $\|\cdot\|_\infty$.  Optimizing over all $X\in \V$ and $\overline{X} \in \overline{\V}$ yields
$$
\inf_{X\in \V} \|\A(X)\|_\infty \geq \min_{\overline{X}\in \overline{\V}} \|\A(\overline{X})\|_\infty - \eps\cdot\|\A\|_{F\rightarrow\infty}.
$$

We can then bound the probability (over the random choice of $\A$) by:
\begin{align*}
\P\left(\inf_{X\in \V} \|\A(X)\|_\infty = 0\right) &\leq \P\left(\inf_{X\in \V} \|\A(X)\|_\infty \leq \eps\log(1/\eps)\right)\\ \notag
&\leq \P\left(\min_{\overline{X}\in \overline{\V}} \|\A(\overline{X})\|_\infty - \eps\cdot\|\A\|_{F\rightarrow\infty} \leq \eps\log(1/\eps)\right).\notag
\end{align*}

Conditioning on whether $\|\A\|_{F\rightarrow\infty} > \log(1/\eps)$ and using the law of total probability yields
\begin{align}\label{eqn:probs}
&\P\left(\min_{\overline{X}\in \overline{\V}} \|\A(\overline{X})\|_\infty - \eps\cdot\|\A\|_{F\rightarrow\infty} \leq \eps\log(1/\eps)\right)\notag \\ \leq 
 \; &\P\left(\min_{\overline{X}\in \overline{\V}} \|\A(\overline{X})\|_\infty \leq 2\eps\log(1/\eps)\right) + \P\Big(\|\A\|_{F\rightarrow\infty} > \log(1/\eps)\Big).
\end{align}

Clearly, for $\eps$ small, the second term in this last line of~\eqref{eqn:probs} is neglible.  Thus it remains to bound the first term.  Letting $z_1, \ldots, z_m$ be the coordinates of $\A(\overline{X})$ for a given $\overline{X} \in \overline{\V}$, we have: 
\begin{align*}
\P\left(\min_{\overline{X}\in \overline{\V}} \|\A(\overline{X})\|_\infty \leq 2\eps\log(1/\eps)\right) &\leq |\overline{\V}|\cdot \P\left(\|\A(\overline{X})\|_\infty \leq 2\eps\log(1/\eps)\right) \\
&= |\overline{\V}|\cdot \P\left(\max\{|z_1|, \ldots, |z_m|\} \leq 2\eps\log(1/\eps)\right)\\
&\leq \left(\frac{3L}{\eps}\right)^d\cdot \prod_{i=1}^m \Big(\P\left(|z_i| \leq 2\eps\log(1/\eps)\right)\Big),\\
\end{align*}

where in the last line we have utilized the independence of all $z_i = \langle A_i, \overline{X} \rangle$ and the size of the net $\V$.  
 
 Now, 
\begin{align*}
 \P\left(|z_i| \leq 2\eps\log(1/\eps)\right) &= \P\left(|\langle A_i, X\rangle| \leq 2\eps\log(1/\eps)\right) \\
 &=  \P\left(\left|\< A_i, \frac{X}{\|X\|_F}\>\right| \leq \frac{2\eps\log(1/\eps)}{\|X\|_F}\right).
\end{align*}
 
 Since $\V$ is closed and does not contain zero, the Frobenius norm of any $X\in\V$ is bounded uniformly away from zero.  This combined with the assumption that $\mathbb{P}(| \langle A_i, X \rangle | < \eps) < C\eps$ for every $X$ with $\|X\|_F=1$ yields

\begin{align*}
\left(\frac{3L}{\eps}\right)^d\cdot \prod_{i=1}^m \Big(\P\left(|z_i| \leq 2\eps\log(1/\eps)\right)\Big)
&\leq \left(\frac{3L}{\eps}\right)^d\cdot \left(4C'\eps\log(1/\eps)\right)^m\\
&= C''\eps^{m-d}\cdot\left(\log(1/\eps)\right)^m\\
&= C''\eps\cdot\left(\log(1/\eps)\right)^m,
\end{align*}

  where $C$, $C'$ and $C''$ are constants which do not depend on $\eps$.  The last equality follows since $m = d+1$.  Taking $\eps$ to zero once again makes this last term vanish.  Thus the probability that the null space of $\A$ intersects $\V$ is zero.  Since there are only countably many $\V_i$, the probability that the null space of $\A$ intersects any of these sets is also zero.  

\end{proof}

Now we turn to proving our main result Theorem~\ref{thm:main}.  
To prove this theorem, it will be useful to view the space of rank-$2r$ unit norm matrices as a smooth manifold.  Then Theorem~\ref{thm:main} will follow as a corollary of Theorem~\ref{thm:gen}.  We denote by $\|\cdot\|_F$ the usual Frobenius norm for matrices.

\begin{lemma}\label{lem:mani}
The space of rank-$r$ matrices with fixed Frobenius norm,
$$
\R = \{X\in\mathbb{R}^{n\times n} : \rank(X) = 2r, \|X\|_F=1\},
$$
is a smooth manifold with dimension $4nr-4r^2-1$.
\end{lemma}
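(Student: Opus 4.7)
The plan is to realize $\R$ as a codimension-one transverse slice of the full rank-$2r$ manifold inside $\mathbb{R}^{n\times n}$. The proof naturally splits into two pieces: first show the set of rank-$2r$ matrices is a smooth manifold of dimension $4nr-4r^2$, and then show that the unit sphere in Frobenius norm cuts this manifold transversally.

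For the first step I would use a local chart built from the implicit function theorem. Pick $X_0$ of rank $2r$; then some $2r\times 2r$ minor of $X_0$ is nonzero, and after a permutation of rows and columns we may write any nearby matrix as
\[
X=\begin{pmatrix} A & B \\ C & D \end{pmatrix},\qquad A\in\mathbb{R}^{2r\times 2r} \text{ invertible},
\]
with $B\in\mathbb{R}^{2r\times(n-2r)}$, $C\in\mathbb{R}^{(n-2r)\times 2r}$, $D\in\mathbb{R}^{(n-2r)\times(n-2r)}$. Row-reducing shows that in this neighborhood $\rank(X)=2r$ if and only if $D=CA^{-1}B$, giving a smooth parametrization by the free block entries $(A,B,C)$. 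The total dimension is $(2r)^2+2\cdot 2r(n-2r)=4nr-4r^2$, so the set $\M_{2r}\defby\{X:\rank(X)=2r\}$ is a smooth manifold of that dimension, with $C^\infty$ transition maps.

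For the second step, consider the smooth function $f:\M_{2r}\to\mathbb{R}$ defined by $f(X)=\|X\|_F^2-1$. I claim $df|_X\neq 0$ for every $X\in\M_{2r}$ with $\|X\|_F=1$. As a function on the ambient $\mathbb{R}^{n\times n}$, its differential at $X$ is $H\mapsto 2\langle X,H\rangle$, so it suffices to exhibit a tangent vector $H\in T_X\M_{2r}$ with $\langle X,H\rangle\neq 0$. But the curve $\gamma(t)=tX$ is smooth and has constant rank $2r$ for $t$ in a neighborhood of $1$, hence lies in $\M_{2r}$; its velocity $\gamma'(1)=X$ therefore belongs to $T_X\M_{2r}$, and $\langle X,X\rangle=\|X\|_F^2=1\neq 0$. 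By the regular value theorem, $\R=f^{-1}(0)$ is a smooth submanifold of $\M_{2r}$ of codimension one, hence of dimension $4nr-4r^2-1$.

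The main conceptual work is Step one, and the only subtle point there is to justify that $\M_{2r}$ is genuinely a manifold (not merely a variety with singularities): this is why we restrict to matrices of rank \emph{exactly} $2r$ rather than at most $2r$, and why the invertibility of the $A$-block gives a clean graph parametrization. Step two is a routine transversality argument, and the curve $t\mapsto tX$ furnishes the tangent direction needed to verify that the Frobenius sphere meets $\M_{2r}$ transversally.
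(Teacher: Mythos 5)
Your proof is correct. The second half (cutting with the Frobenius sphere) is essentially the paper's argument: both you and the authors use the radial curve $t \mapsto tX$ to produce the tangent vector $X \in T_X\R'$ pointing normal to the sphere --- you phrase it as the regular value theorem for $f(X)=\|X\|_F^2-1$ restricted to the rank-$2r$ manifold, the paper as transversality of the intersection $\R'\cap\S$, but these are the same computation. Where you genuinely diverge is in proving that the set of matrices of rank exactly $2r$ (your $\M_{2r}$, the paper's $\R'$) is a smooth manifold. The paper realizes it as the orbit of $\mathrm{diag}(\mathbb{I}_{2r},\mathbb{O}_{n-2r})$ under the transitive action $(g,h)\cdot A = gAh^{-1}$ of $GL(n,\mathbb{R})\times GL(n,\mathbb{R})$, identifies it with the homogeneous space $\G/\H$ for $\H$ the closed stabilizer, and then cites a reference for the dimension $4nr-4r^2$. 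You instead build explicit local charts: after permuting rows and columns so that an invertible $2r\times 2r$ block occupies the top-left corner, the rank condition becomes the vanishing of the Schur complement $D-CA^{-1}B$, and the graph parametrization by $(A,B,C)$ simultaneously proves smoothness and yields the dimension $(2r)^2+2\cdot 2r(n-2r)=4nr-4r^2$ by direct count. Your route is more elementary and self-contained --- no Lie theory is needed, and the dimension falls out of the chart rather than being quoted --- while the paper's orbit argument is shorter to state and exposes the homogeneous-space structure of the fixed-rank locus. Both arguments are standard and both are complete.
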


\begin{proof}

We will first show that the space of rank-$2r$ matrices (with arbitrary Frobenius norm) is a smooth manifold.  This is a well-known result but we sketch the proof.  Then we will show that the intersection of this space and the sphere of all unit norm matrices is transverse which will yield the desired result.  To this end, let $\R'$ be as in~\eqref{defS} and let $\M = \{A\in \mathbb{R}^{n\times n}\}$ be the set of all $n\times n$ matrices.  Let $\G$ be the lie group consisting of the cross product of the general linear group with itself:
$$
\G = GL(n, \mathbb{R}) \times GL(n, \mathbb{R}).
$$ 

For an element $(g, h) \in \G$, let it act on elements $A\in \R'$ by $(g, h)A = gAh^{-1}$. Since this is just matrix multiplication, this action is clearly continuous.  Moreover, it is transitive.  Indeed, let $A, B\in \R'$  Since $A$ is rank $2r$, there are $g, h\in GL(n,\mathbb{R})$ such that 
$$
gAh^{-1} = \left[
     \begin{array}{lr}
       \mathbb{I}_{2r} & \\
        & \mathbb{O}_{n-2r}
     \end{array}
   \right] \defby D,
$$

where $\mathbb{I}_{2r} $ denotes the $2r\times 2r$ identity matrix and $\mathbb{O}_{n-2r}$ denotes the $n-2r \times n-2r$ matrix of zeros. Similarly, there are $y,z\in GL(n,\mathbb{R})$ such that $yBz^{-1}$ equals this same block matrix.  Thus $gAh^{-1} = yBz^{-1}$ and so $A=g^{-1}yBz^{-1}h$ which proves transitivity since $(g^{-1}y, z^{-1}h) \in \G$.  Next let $\H$ be the stabilizer of the matrix $D$ under the action of $\G$.  Since the action of $\G$ is continuous and transitive, the stabilizer $\H$ is a closed subgroup of the lie group $\G$ and thus $\H$ is a closed lie subgroup.  Therefore $\G/\H$ is a smooth manifold.  But $\H$ is the stabilizer of $D$ under $\G$ and so since the action is transitive, $\G/\H$ must be isomorphic to the orbit of $D$ under the action of $\G$.  But the orbit of $D$ is precisely our set $\R'$ and thus $\R'$ is also a smooth manifold. 

We now wish to show that $\R$ is also a smooth manifold by viewing $\R$ as the intersection of $\R'$ and the sphere
$$
\S = \{A\in \M : \|A\|_F = 1\}.
$$

It is clear that $\S$ is a smooth manifold (it is a sphere of the smooth manifold $\M$) and by above $\R'$ is also a smooth manifold.  Since $\R = \R'\cap \S$ and both $\R'$ and $\S$ are smooth manifolds, to show that $\R$ is also a smooth manifold it suffices to show that this intersection is transverse.  That is, we need to show that for any $A \in \R'\cap \S$, the direct sum of the tangent space of $\S$ at $A$ and $\R'$ at $A$ is equal to the tangent space of $\M$ of $A$:
$$
T_A(\S) \oplus T_A(\R') = T_A(\M).
$$

Since $\S$ has codimension $1$ in $\M$, it will suffice to show that $T_A(\R')$ contains a vector in the direction normal to the sphere.  Now for any $A\in \R'$ and $k\neq 0$, the matrix $kA$ will clearly also have rank $2r$ and thus be contained in $\R'$.  Therefore $\R'$ contains the entire line $L$ through the origin containing $A$ (excluding the actual origin itself).  Then since $L \subset \R'$, we have $L = T_A(L) \subset T_A(\R')$.  Thus we must indeed have $T_A(\S) \oplus T_A(\R') = T_A(\M)$.  Therefore the intersection $\R = \S\cap \R'$ is transverse and so it is a smooth manifold.

Finally, it is well known that the dimension of the manifold $\R'$ is $4nr-4r^2$ (see~\cite[Chapter 8]{lee2003introduction}) and the codimension of $\S$ in $\M$ is 1.  Thus codim($\R$) = codim($\S$) + codim($\R'$) = $n^2 - (4nr - 4r^2) + 1$ and so the dimension of $\R$ is $4nr - 4r^2 - 1$.  

\end{proof}

We finally show that Theorem~\ref{thm:main} and Theorem~\ref{thm:weak} follow as corollaries.

\begin{proof}[Proof of Theorem~\ref{thm:main}]
By Lemma~\ref{lem:mani}, $\R$ is a smooth manifold of dimension $d = 4nr - 4r^2 - 1$ and note that clearly $\R = \R\backslash\{0\}$.  Let $\A$ be the operator taking $m \geq 4nr - 4r^2$ Gaussian measurements $\langle A_i, X \rangle$ for $X\in \R$ and $A_i$ (for $i = 1, 2, \ldots m$) having i.i.d. Gaussian entries.  Then all $\langle A_i, X \rangle$ are independent and have (the same) continuous density.  Therefore by Theorem~\ref{thm:gen}, $\Null(\A) \cap \R = \emptyset$.  Applying Theorem~\ref{thm:gen} for all ranks between $1$ and $2r$, we see that there is no matrix of rank $2r$ or less in the null space of $\A$.  Thus when $M$ has rank $r$ (or less) there can be no other matrix $X$ with $\A(X) = \A(M)$ having the same or lower rank.  This proves that~\eqref{eqn:MCtheory} must recover the matrix $M$ and completes the proof.
\end{proof}

\begin{proof}[Proof of Theorem~\ref{thm:weak}]

Let $\W = \{X - M : \rank(X) = r\}$.  Note the proof of Lemma~\ref{lem:mani} explicitly shows that the space of all matrices of a fixed rank $r$ is a smooth manifold of dimension $2nr - r^2$.  Since $\W$ is a shift of this space, it is also a smooth manifold of the same dimension.  Then by Theorem~\ref{thm:gen}, we have that with probability one
$$
\W\backslash\{0\} \cap \Null(\A) = \emptyset.
$$
Repeating this for ranks $1$ through $r$, we get that with probability one
\begin{equation}\label{yes}
\W' \backslash\{0\} \cap \Null(\A) = \emptyset
\end{equation}
where $\W' = \{X - M : \rank(X) \leq r\}$.  Now let $X$ be the solution of the rank minimization problem~\eqref{eqn:MCtheory}.  Since $M$ has rank $r$ and is a feasible matrix, $\rank(X) \leq r$ as well.  Thus $X - M \in \W'$.  But since $\A(X) = \A(M)$, $X-M\in\Null(A)$.  Thus by~\eqref{yes} it must be that $X - M = 0$ which shows $X=M$ is the recovered matrix.
\end{proof}

\section{Discussion}\label{sec:discuss}
The bounds on the number of measurements given by Theorems~\ref{thm:main} and~\ref{thm:weak} of $4nr-4r^2$ and $2nr-r^2+1$ are analagous to the bounds of $2s$ and $s+1$ in compressed sensing.  As we did in the compressed sensing case, it is of course insightful to compare rank minimization and nuclear-norm minimization.  To (provably) recover $n\times n$ rank-$r$ matrices using nuclear-norm minimization, one needs $Cnr$ measurements.  As discussed in~\cite{candes2009tight}, by observing that the space of rank-$r$ matrices has a subspace which consists of all rank-$r$ matrices whose last $n-r$ rows are zero, one sees that at least $2nr$ measurements are required to recover all rank-$r$ matrices.  In~\cite{oymak2010new} explicit formulas and graphs are given from which bounds on the constant $C$ can be derived.  Even more recent results in~\cite{OH10:newest} prove that $6nr$ measurements suffice for weak recovery and $16nr$ measurements suffice for strong recovery.  New work in~\cite{parrilo2010convex,bentalk} also shows weak recovery when $m\geq 6nr-3r^2$.  
In addition, numerical results indicate that weak recovery requires about $4nr - 2r^2$ Gaussian measurements~\cite[Figure 1]{oymak2010new}.
Thus according to these results, rank minimization does succeed with somewhat fewer measurements.
We emphasize that this should not be a surprise --- nuclear-norm minimization is a tractible method whereas rank minimization is an intractible method whose guarantees give us theoretical bounds with which to compare.  In fact, the price to pay for a tractible method in low-rank matrix recovery seems to be a very reasonable one.  

As discussed above, our general manifold result, Theorem~\ref{thm:gen}, is tight.  However, this does not imply that its consequences, Theorems~\ref{thm:main} and~\ref{thm:weak}, are tight since the set of matrices of fixed rank is not a linear subspace.  We conjecture that the strong recovery requirement, $m\geq 4nr-4r^2$ from Theorem~\ref{thm:main}, is tight because the number of measurements required matches the dimension of the underlying manifold.  In the case of the weak recovery requirement $m\geq 2nr -2r^2 + 1$ given by Theorem~\ref{thm:weak}, we require $m$ to be one greater than the dimension of the underlying manifold.  However, we once again conjecture this to be tight at least within an additive factor of one for the same reason.


Our results in conjunction with work on nuclear-norm minimization show how close nuclear-norm minimization guarantees are to those of the intractible problem of rank minimization.  While rank minimization requires fewer measurements, it is not at all an unreasonable amount to pay in order to solve the problem via a computationally feasible method.

 \subsection*{Acknowledgment}
We would like to thank Boris Bertman and Rohit Thomas for thoughtful discussions.  This work was partially supported by the NSF DMS
EMSW21-VIGRE grant.

\bibliography{thebib}

\end{document}